%% 18 Subat 2017

\documentclass[12pt]{article}

\usepackage{amssymb,amsmath}
\usepackage{amsbsy}
\usepackage{amsthm}
\usepackage{mathrsfs}
\usepackage{verbatim}
\usepackage[colorlinks]{hyperref}
\usepackage{graphicx,subfigure}
\usepackage[english]{babel}
\usepackage{dsfont}
\usepackage[utf8]{inputenc}
\usepackage{xifthen}

\usepackage{fullpage}

\usepackage{tikz}
\usetikzlibrary{backgrounds,fit, matrix}
\usetikzlibrary{positioning}
\usetikzlibrary{calc,through,chains}
\usetikzlibrary{arrows,shapes,snakes,automata, petri}

\usepackage{authblk}

\newtheorem{theorem}{Theorem}[section]

\newtheorem{Theorem}[theorem]{Theorem}
\newtheorem{Corollary}[theorem]{Corollary}
\newtheorem{Lemma}[theorem]{Lemma}
\newtheorem{Proposition}[theorem]{Proposition}

\theoremstyle{definition}
\newtheorem{Definition}[theorem]{Definition}
\newtheorem{Example}[theorem]{Example}

\theoremstyle{remark}

\newtheorem{Question}[theorem]{Question}
%\renewcommand{\theQuestion}{}  % to make the notation environment
                               % unnumbered

%\renewcommand{\theExercise}{}
\newtheorem{Remark}[theorem]{Remark}

%\numberwithin{equation}{section}
\numberwithin{figure}{section}

\newcommand{\PP}{{\mathbb P}}

\newcommand{\Z}{{\mathbb Z}}

\newcommand{\R}{{\mathbb R}}

\newcommand{\mc}[1]{\mathcal{#1}}

\newcommand{\mt}[1]{\text{#1}}

\newcommand{\ndec}[1][]{\ifthenelse{\isempty{#1}}{nondecomposable}{#1-nondecomposable}}
\newcommand{\bun}[4][]{#2 \to #3 \xrightarrow{#1} #4}

\begin{document}

\title{Monodromy of torus fiber bundles and decomposability problem}

\author[1]{Mahir Bilen Can}
\author[2]{Mustafa Topkara\footnote{This author is partially supported by Mimar Sinan Fine Arts University Scientific Research Projects Unit.}}

\affil[1]{{\small Tulane University, New Orleans; mcan@tulane.edu}}
\affil[2]{{\small Mimar Sinan Fine Arts University; m.e.topkara@gmail.com}}
\normalsize

\date{\today}
\maketitle

\begin{abstract}

The notion of a (stably) decomposable fiber bundle is introduced. In low dimensions, for torus fiber 
bundles over a circle the notion translates into a property of elements of the special linear 
group of integral matrices. We give a complete characterization of the stably decomposable 
torus fiber bundle of fiber-dimension less than 4 over the circle. 
\vspace{.5cm}

\noindent 
\textbf{Keywords:} Fiber bundles, decomposability, monodromy, unimodular matrices.\\ 
\noindent 
\textbf{MSC:} 57M07, 20G30 
\end{abstract}

\section{Introduction}\label{S:Introduction}

%Which manifolds cannot be written as a cartesian product? 
A closed oriented manifold $M$ is said to be decomposable if it is homeomorphic to $M_1 \times M_2$, 
where $M_1$ and $M_2$ are manifolds with $0< \dim M_i < \dim M$ for $i=1,2$. 
The question of which manifolds are decomposable carries an obvious importance for mathematics. 
In dimension 1, up to homeomorphism, the only closed and oriented manifold is the unit circle $S^1$, 
so, we do not have much ado. The 2-torus $T^2=S^1\times S^1$, is decomposable, however, the 
2-sphere $S^2$ is not homeomorphic to a product of two manifolds of dimension 1, hence, 
it is not decomposable. We call such manifolds indecomposable.

It is easily seen by using elementary properties of Euler characteristic that any closed and oriented surface of 
genus $g\neq 1$ is indecomposable. However, already in dimension 3 detecting indecomposable manifolds 
requires some deeper information about the fundamental group of the underlying manifold, and in dimension 
4 the problem becomes significantly harder. 
Moreover, in the astounding world of topological manifolds it may happen that when an indecomposable 
manifold $M$ is crossed (cartesian product) with another (indecomposable) manifold $N$ with 
$1\leq \dim N \leq \dim M$ the resulting manifold decomposes into lower dimensional pieces:
\begin{align*}
M\times N \simeq M_1\times \cdots \times M_r,
\end{align*}
where $M_1,\dots, M_r$ are manifolds with $0< \dim M_i < \dim M$ for $i=1,\dots,r$.
In this case, $N$ is said to stably decompose $M$, and $M$ is called stably decomposable.
If there is no such $N$, then $M$ is called stably indecomposable.

In their recent preprint~\cite{KwasikSchultz}, Kwasik and Schultz show that a 3-manifold $M$ is 
decomposable if and only if it is stably decomposable. 
Moreover, they show that there exist infinitely many non-homeomorphic indecomposable 4-manifolds 
$M$ such that $M\times S^k$ is homeomorphic to $S^1 \times S^k \times \R\PP^3$, where $k=2,3,4$.

\vspace{.5cm}

The notion of indecomposability applies to different categories. 
One could consider the decomposability with respect to diffeomorphism,
furthermore, one could confine oneself to homogeneous manifolds only.
Here, a homogenous manifold means one that admits a transitive action of a Lie group, hence
is isomorphic to a coset space. A homogeneous manifold $M$ is called homogeneously decomposable 
if $M$ is diffeomorphic to a direct product $M_1\times M_2$ of homogeneous manifolds 
$M_1$ and $M_2$ with $\dim M_i \geq 1$.
% and it is called homogeneously indecomposable otherwise. 

Recall that a manifold is called aspherical if its homotopy groups $\pi_k(M)$ vanish for $k\geq 2$. 
An important class of aspherical manifolds is the class of solvmanifolds. 
By definition, a {\it solvmanifold} is a homogeneous space of a connected solvable Lie group. 
It turns out that two compact solvmanifolds are isomorphic if and only if their fundamental groups are isomorphic, \cite{Mostow}.
If $M$ is an aspherical compact manifold with a solvable fundamental group, 
then it is homeomorphic to a solvmanifold. This follows from the work of Farrell and Jones 
[Corollary B,~\cite{FarrellJones}] for $\dim \neq 3$, and from Thurston's Geometrization Conjecture
for $\dim = 3$. 
It follows from these results that if a compact manifold admits a torus fibration over a torus, then it is homeomorphic to a solvmanifold. 

In general, the notion of homogeneous decomposability is different from decomposibility in differential category. 
However, it is shown by Gorbatsevich (Proposition 1,~\cite{Gorbatsevich7}) that homogenous decomposability is 
equivalent to (topological) decomposability as defined in the first paragraph.

\vspace{1cm}

The goal of our paper is to define decomposability and carry out a similar analysis 
in the category of fiber bundles, where fiber product plays the role of the cartesian 
product. Roughly speaking, we call a bundle over a space $B$ as $B$-indecomposable 
if it is indecomposable in the category of fiber bundles over $B$. This generalizes the previous 
concepts since the (stable) decomposability of a manifold $M$ is equivalent to (stable) 
$*$-decomposability of the trivial bundle $\bun{M}{M}{*}$ over the one point space $*$.

In Section~\ref{S:decomposability}, we observe that (stable) indecomposability of the fiber 
implies (stable) $B$-indecomposability, hence by the result of Kwasik and Schultz, 
there exist 4-fiber-dimensional fiber bundles which are $B$-indecomposable but stably 
$B$-decomposable. 
In Section~\ref{S:prep}, for low dimensions we interpret the stable decomposability of a torus fiber bundle over 
a circle in terms of a matrix problem in $\mt{SL}(n,\Z)$. 
In Section~\ref{S:3x3}, we show that, for a 3-torus bundle over $S^1$, the $S^1$-indecomposability 
(Theorem~\ref{T:our remark}) and the stably $S^1$-indecomposability (Theorem~\ref{T:stably T3}) 
are both determined by the characteristic polynomial of their monodromy matrices.  
We end our paper by indicating how our results give information about the same problem
for torus fibrations over an arbitrary torus as the base space (Remark~\ref{R:bitirici}).

\section{Preliminaries}

Standard references for theory of fiber bundles are the textbooks~\cite{Husemoller} and \cite{Steenrod}.

\vspace{.5cm}

For us, a {\it fiber bundle} is a triplet $(F,E,B)$ of topological spaces along with a surjective map 
$p: E\rightarrow B$ satisfying the following properties: 
\begin{itemize}
\item $F$ and $E$ are closed, oriented manifolds;
\item $B$ is a connected CW-complex;
\item for each point $x\in E$ there exists a 
neighborhood $U_x\subseteq B$ of $p(x)$ and a homeomorphism $\phi_x: p^{-1}(U_x)\rightarrow U_x\times F$ 
such that the following diagram commutes
\begin{figure}[htp]
\begin{center}
\begin{tikzpicture}
\node (a) at (-2,2) {$p^{-1}(U_x)$};
\node (b) at (2,2) {$U_x\times F$};
\node (c) at (2,0) {$U_x$};
\draw[->] (a) -- (b);
\draw[->] (b) -- (c);
\draw[->] (a) -- (c);
\node at (2.45,1) {$pr_1$};
\node at (0,.65) {$p$};
\node at (0.25,2.35) {$\phi_x$};
\end{tikzpicture}
\end{center}
\end{figure}

Here, $pr_1$ denotes the first projection.
\end{itemize}

\begin{comment}

A compatibility condition between charts is required. This is expressed as follows: 
For each pair of charts $U_i$ and $U_j$ (around some points $x_i$ and $x_j$ from $B$), the map 
$\phi_{x_i} \phi_{x_j}^{-1}:\ U_i\cap U_j \times F \rightarrow U_i\cap U_j \times F$ is a homeomorphism.
%\begin{Remark}
As a consequence of the last requirement, there exits a continuous map $t_{ij}: U_i\cap U_j\rightarrow G$, 
the group of all self-homeomorphisms of $F$, satisfying 
\begin{enumerate}
\item $t_{ii}(x) = id$
\item $t_{ij}(x) = t_{ji}(x)^{-1}$
\item $t_{ik}(x) = t_{ij}(x) t_{jk}(x)$
\end{enumerate}
at all points $x$ in the intersection of the corresponding charts. 
The group $G$ is called the structure group of the fiber bundle. 
%\end{Remark}

\end{comment}

When we have a fiber bundle, we call $E$ the total space, $B$ the base, and $F$ the fiber. 
Note that for any $b\in B$, the preimage $p^{-1}(b)$ is homeomorphic to $F$. 
We call $p$ the projection map of the bundle. 

If $F,E,B$ and $p$ are given, then we denote the corresponding fiber bundle by $\bun[p]{F}{E}{B}$ 
and occasionally skip $p$ from our notation. If there is no risk of confusion we denote $\bun[p]{F}{E}{B}$ by $E$ only.

If $\bun[p]{F}{E}{B}$ and $\bun[p']{F'}{E'}{B}$ are two fiber bundles over the same base, 
then a {\em fiber bundle map} from $E$ to $E'$ is a continuous map $\psi: E\to E'$ such that 
$p=\psi \circ p'$. If furthermore $\psi$ is a homeomorphism, then the fiber bundles $E$ and $E'$
are called isomorphic fiber bundles.

By the {\it fiber dimension} of a fiber bundle $E$, we mean the dimension of the fiber $F$ and denote it by 
$$
\dim_B E := \dim F.
$$ 
By the {\em fiber product} of two fiber bundles $\bun[p]{F}{E}{B}$ and $\bun[p']{F'}{E'}{B}$ over the same base $B$,
we mean the fiber bundle $\bun[q]{F\times F'}{E\times_B E'}{B}$, where
$$
E\times_B E' := \{ (x,y)\in E\times E':\ p(x)=p'(y) \}, 
$$
and the projection map defined by $q(x,y) := p(x)=p'(y)$ for $(x,y)\in E\times_B E'$.

Fiber product is an associative operation on fiber bundles, so we do not use parentheses in our notation.

\section{Decomposability of fiber bundles}\label{S:decomposability}

\begin{Definition}
A fiber bundle $\bun F E B$ is called $B$-decomposable 
if there exist fiber bundles $\bun{F_1}{E_1}{B}$ and $\bun{F_2}{E_2}{B}$ such that 
\begin{enumerate}
\item $\dim_B E_1,\dim_B E_2 < \dim_B E$; 
\item $E\cong E_1 \times_B E_2$ (fiber bundle isomorphism).
\end{enumerate}
If $E$ is not a $B$-decomposable fiber bundle, then we call it $B$-indecomposable. 
\end{Definition}
\begin{Definition}
Let $\bun{F}{E}{B}$ be a bundle of fiber-dimension $n$. 
$E$ is called stably $B$-decomposable if there exists a bundle $\bun{F'}{E'}{B}$ of fiber-dimension $0<n'\leq n$ such that 
$$
E \times_B E' \cong E_1\times_B \cdots \times_B E_k
$$ 
for some bundles $E_1,\dots,E_k$ with base $B$ and $0<\dim_B E_i < n$
for $i=1,\ldots k$. In this case, $E'$ is said to stably $B$-decompose $E$, 
and $E$ is called stably $B$-decomposable. 
If $E$ is not stably $B$-decomposable by any fiber bundle over the base $B$, 
then $E$ is called stably $B$-indecomposable. 
\end{Definition}

Since a decomposition of a fiber bundle results in a decomposition of its fiber, if 
$F$ is an indecomposable manifold, then there is no $B$-decomposable fiber bundle 
of the form $\bun{F}{E}{B}$.
Similarly, the corresponding statement holds true if $F$ is assumed to be stably indecomposable. 
We record this observation in the next proposition without a proof.

\begin{Proposition} \label{p:ndec}
The fiber bundle $\bun[p]{F}{E}{B}$ is $B$-indecomposable (resp. stably $B$-indecomposable) 
if $F$ is indecomposable (resp. stably indecomposable). 
\end{Proposition}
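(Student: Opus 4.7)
The plan is to prove the contrapositive in each case: if $E$ is $B$-(stably) decomposable, then $F$ is (stably) decomposable as a manifold. The single ingredient needed is that the fiber of a fiber product $E_1 \times_B E_2$ over a point $b \in B$ is canonically homeomorphic to the product of the fibers of $E_1$ and $E_2$ over $b$. This follows straight from the definition
$$
(E_1\times_B E_2)\big|_b \;=\; \{(x,y)\in E_1\times E_2 : p_1(x)=b=p_2(y)\} \;=\; p_1^{-1}(b)\times p_2^{-1}(b),
$$
and the analogous identity for any finite iterated fiber product.

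For the non-stable statement, first I would take a $B$-decomposition $E \cong E_1\times_B E_2$, where $\bun{F_i}{E_i}{B}$ has $0<\dim_B E_i < \dim_B E$ for $i=1,2$. Since fiber bundle isomorphisms restrict to homeomorphisms on the fibers over each base point, picking any $b\in B$ and applying the identity above yields $F \cong F_1 \times F_2$ with $0 < \dim F_i < \dim F$. This is precisely a decomposition of $F$ as a manifold, contradicting the hypothesis that $F$ is indecomposable. Hence $E$ must be $B$-indecomposable.

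For the stable statement, I would follow the same pattern. Suppose $E'$ with fiber $F'$ stably $B$-decomposes $E$, so that
$$
E\times_B E' \;\cong\; E_1\times_B \cdots \times_B E_k
$$
with $0 < \dim_B E_i < \dim_B E = \dim F$ for each $i$. Restricting to the fiber over any $b\in B$ and using associativity of the fiber product (which translates at the level of fibers into associativity of the Cartesian product) gives $F\times F' \cong F_1\times\cdots\times F_k$ with $0 < \dim F_i < \dim F$. This exhibits $F'$ as a manifold stably decomposing $F$, contradicting the assumption that $F$ is stably indecomposable.

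There is really no significant obstacle: the entire content is that the fiber of a (finite) fiber product is the product of the fibers, plus the observation that bundle isomorphisms respect fibers. The dimension bookkeeping $\dim_B E_i = \dim F_i$ is built into the definition of fiber dimension, and the strict inequalities transfer directly from the decomposition hypothesis to the resulting decomposition of $F$. This is why, as the authors note, the proposition is recorded without a detailed proof.
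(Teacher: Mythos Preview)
Your argument is correct and matches the paper's approach exactly: the authors state just before the proposition that a decomposition of a fiber bundle yields a decomposition of its fiber, and then record the proposition without proof. Your write-up simply spells out this observation, including the dimension bookkeeping (and the bound $0<\dim F'\le\dim F$ needed in the stable case), so there is nothing to add.
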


By combining the previous proposition with the results of Kwasik and Schultz~\cite{KwasikSchultz}
we obtain the following corollary.

\begin{Corollary}\label{C:converse is wrong}
Let $\bun{F}{E}{B}$ be a fiber bundle of fiber-dimension $n$. Then
\begin{enumerate}
\item for $n \leq 3$, if $F$ is indecomposable, then $E$ is (stably) $B$-indecomposable. 
\item There exists a fiber bundle of fiber-dimension $4$ which is $B$-indecomposable but stably $B$-decomposable.
\end{enumerate}
\end{Corollary}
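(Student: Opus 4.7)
The plan is to combine Proposition~\ref{p:ndec} with the two Kwasik--Schultz results recalled in the introduction: (a) a closed oriented 3-manifold is decomposable if and only if it is stably decomposable, and (b) there exist indecomposable closed oriented 4-manifolds $M$ satisfying $M\times S^k \cong S^1\times S^k \times \R\PP^3$ for $k\in\{2,3,4\}$.

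For part (1), Proposition~\ref{p:ndec} alone yields $B$-indecomposability of $E$ whenever $F$ is indecomposable, with no restriction on $n$; so the real content is the stable version. I would argue separately for each $n\le 3$. When $n=1$ the only admissible closed oriented fiber is $S^1$, and stable $B$-decomposability is vacuous because the definition asks for pieces of fiber-dimension strictly between $0$ and $1$. When $n=2$, an indecomposable closed oriented surface has nonzero Euler characteristic, while a stable decomposition would, over any base, force the fibers to be products of circles; a short $\chi$- and $\pi_1$-computation then rules out any such decomposition, so $F$ is stably indecomposable and Proposition~\ref{p:ndec} applies. When $n=3$, (a) upgrades indecomposability of $F$ to stable indecomposability, and Proposition~\ref{p:ndec} again gives the conclusion.

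For part (2), I would exhibit an explicit counterexample over the one-point base $B=\{*\}$, which is a connected CW-complex and hence a legitimate base in the sense of Section~\ref{S:decomposability}. Over such a base the fiber product coincides with the Cartesian product, so $B$-(in)decomposability of the trivial bundle $\bun{N}{N}{*}$ is equivalent to ordinary (in)decomposability of $N$ as a manifold. Choosing any Kwasik--Schultz 4-manifold $M$ with $M\times S^2\cong S^1\times S^2\times \R\PP^3$, Proposition~\ref{p:ndec} shows that $E:=\bun{M}{M}{*}$ is $B$-indecomposable because $M$ is indecomposable. Setting $E':=\bun{S^2}{S^2}{*}$, the given identification exhibits $E\times_B E'$ as a fiber product of three trivial bundles of fiber-dimensions $1$, $2$, and $3$, each strictly less than $4$, which witnesses stable $B$-decomposability of $E$.

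No deep technical obstacle arises; the main points of care are verifying that the degenerate base $B=\{*\}$ fits the definitions of the paper, and handling the low-dimensional cases $n\in\{1,2\}$ of part (1), which fall outside the direct scope of (a) but are routine. Everything else is immediate from Proposition~\ref{p:ndec} and the Kwasik--Schultz theorems.
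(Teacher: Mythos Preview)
Your proposal is correct and follows the same route the paper indicates: the paper simply states that the corollary is obtained ``by combining the previous proposition with the results of Kwasik and Schultz,'' and you have filled in exactly those details. Your extra care with the cases $n\in\{1,2\}$ of part~(1)---which the Kwasik--Schultz 3-manifold result does not literally cover---and your explicit verification that $B=\{*\}$ is an admissible base are appropriate, and the paper itself remarks elsewhere that (stable) decomposability of a manifold coincides with (stable) $*$-decomposability of the trivial bundle over a point.
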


In the light of Corollary~\ref{C:converse is wrong} the following question deserves an answer. 
\begin{Question}
Is it possible to prove a result that is similar to the second part of Corollary~\ref{C:converse is wrong} if we insist on smaller fiber dimensions than 4? 
In other words, is there a fiber bundle $\bun{F}{E}{B}$ with $\dim_B E \leq 3$ which is $B$-indecomposable but stably $B$-decomposable?
\end{Question}

We resolve this problem in Section~\ref{S:3x3}.

\section{Torus fibrations over $S^1$}\label{S:prep}

We are going to show in Example~\ref{E:smallest counter example} that the converse 
of Proposition~\ref{p:ndec} is not true.
This is one of the reasons why this project is worthwhile to consider. To better explain, 
we are going to have a preliminary (standard) discussion about the classification of fiber bundles 
on relatively simple spaces. For this purpose, we identify the circle $S^1$ as a quotient of the closed 
interval $[0,1]$ via a map $q:[0,1]\to S^1$ such that $q(0)=q(1)$ and which is one-to-one on the open 
interval $[0,1]-\{0,1\}$.

Let $F$ be a space and $\tau : F\to F$ be an orientation preserving homeomorphism. We consider the trivial $F$-bundle structure
on the cartesian product 
$$
\bun[pr_1]{F}{[0,1]\times F}{[0,1]}.
$$ 
By gluing the two ends of this bundle by $\tau$, let us define the new bundle $\bun[p]{F}{E_\tau}{S^1}$. 
Thus $E_\tau$ is the quotient space $([0,1]\times F)/\sim$, where $\sim$ is the equivalence relation
$$
(0,a)\sim(1,\tau(a)) \ \text{ for } a \in F,
$$
and $p$ is the projection that maps an equivalence class $\overline{(t,a)}\in E_\tau$ to the point $q(t)$ in $S^1$. 
The total space $E_\tau$ is called the mapping torus of $\tau$, and $\tau$ is called the monodromy map of the bundle $E_\tau$.
This construction yields isomorphic bundles if two homeomorphisms are isotopic to each other.
Indeed, if $G:[0,1]\times F \to F$ is an isotopy from $\tau$ to $\rho$, then the isomorphism between $E_\tau$ and $E_\rho$ is given by 
$$
\overline{(t,a)}\mapsto \overline{(t,G(t,\tau^{-1}(a))}.
$$
%\footnote{If $G:[0,1]\times F \to F$ is a monodromy from $\tau$ to $\rho$, then the isomorphism is given by $\overline{(t,a)}\to \overline{(t,G(0,a)^{-1}G(t,a))}$.}, 
Similarly, if two homeomorphisms $\tau$ and $\rho$ are conjugate, then they define isomorphic bundles.
(The reason is that both of the homeomorphisms $\tau \circ \rho$ and $\rho \circ \tau$ result in mapping tori
which are isomorphic as fiber bundles over $S^1$.)

Conversely, any fiber bundle $\bun[p]{F}{E}{S^1}$ on the circle can be pulled back onto $[0,1]$ 
via the map $q:[0,1]\to S^1$. Since $[0,1]$ is contractible, the pullback bundle $E'=q^*E$ is 
isomorphic to the product bundle via some map $\Phi:[0,1]\times F \to E'$. 
Now, the fiber bundle $E$ can be recovered from $E'$ by the procedure described above 
using the monodromy map $\tau:F \to F$ defined by the relation $\Phi(0,a)=\Phi(1,\tau(a))$.

As a consequence of the above construction, we see that there is a correspondence between 
the conjugacy classes of the mapping class group of $F$ and isomorphism classes of fiber 
bundles on $S^1$ with fiber $F$.

We apply these considerations to torus bundles. Let us fix an ordered basis $\mathcal{B}$ for $H_1(T^n)$. 
For a homeomorphism $\tau: T^n \to T^n$, 
the \emph{monodromy matrix} $\mathcal{M}_\tau \in \mt{SL}(n,\Z)$ of the bundle $\bun[]{T^n}{E_\tau}{S^1}$ is the matrix representing the induced map 
$\tau_*: H_1(T^n) \to H_1(T^n)$ in the basis $\mathcal{B}$, i.e.
\[
\mathcal{M}_\tau=[\tau_*]_\mathcal{B}.
\]
If the fiber bundles $E_\tau$ and $E_\rho$ are isomorphic, then $\tau$ and $\rho$ are conjugate hence their monodromy matrices $\mathcal{M_\tau}$ and $\mathcal{M_\rho}$ are conjugate too. The converse also holds for low dimensions:

\begin{Lemma}\label{L:torus}
	Let $T^n$ denote the $n$-torus $\times_{i=1}^n S^1$ for $n\leq 3$. Then the fiber bundles over $S^1$ 
	with fiber $T^n$ are parametrized by the conjugacy classes in $\mt{SL}(n,\Z)$.
\end{Lemma}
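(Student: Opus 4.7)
\medskip

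\noindent\textbf{Proof plan.} By the discussion just preceding the statement, isomorphism classes of fiber bundles $\bun{T^n}{E}{S^1}$ correspond bijectively to conjugacy classes of elements of the orientation-preserving mapping class group $\mathcal{M}^+(T^n) := \pi_0(\mathrm{Homeo}^+(T^n))$, via the mapping-torus construction $\tau \mapsto E_\tau$. Thus, to prove the lemma it suffices to exhibit, for $n \leq 3$, an isomorphism of groups
\[
\Psi : \mathcal{M}^+(T^n) \longrightarrow \mathrm{SL}(n,\mathbb{Z}), \qquad [\tau] \longmapsto \mathcal{M}_\tau = [\tau_*]_{\mathcal{B}},
\]
since the conjugacy classes then match up under $\Psi$.

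\medskip

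The plan is to verify that $\Psi$ is a well-defined homomorphism, then establish surjectivity and injectivity separately. That $\Psi$ is well-defined on isotopy classes is immediate: isotopic homeomorphisms induce the same map on $H_1$. Functoriality of $H_1$ gives the homomorphism property, and orientation preservation forces $\det \mathcal{M}_\tau = +1$. For surjectivity, I would realize an arbitrary $A \in \mathrm{SL}(n,\mathbb{Z})$ by the linear automorphism of $\mathbb{R}^n$ determined by $A$, which preserves the standard lattice $\mathbb{Z}^n$ and therefore descends to an orientation-preserving self-homeomorphism $\tau_A$ of $T^n = \mathbb{R}^n/\mathbb{Z}^n$ with $(\tau_A)_* = A$ on $H_1$ under the natural basis $\mathcal{B}$; this step is uniform in $n$.

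\medskip

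The real content is injectivity: a self-homeomorphism of $T^n$ inducing the identity on $H_1$ must be isotopic to the identity. For $n=1$, the group $\mathrm{Homeo}^+(S^1)$ is connected (indeed deformation retracts onto $\mathrm{SO}(2)$), so $\mathcal{M}^+(S^1)$ is trivial, matching $\mathrm{SL}(1,\mathbb{Z})=\{1\}$. For $n=2$, this is the classical theorem of Dehn--Baer--Nielsen identifying $\mathcal{M}^+(T^2)$ with $\mathrm{SL}(2,\mathbb{Z})$; I would simply invoke it. For $n=3$, I would invoke Hatcher's theorem on the homotopy type of $\mathrm{Diff}(T^3)$, which in particular gives $\pi_0(\mathrm{Diff}^+(T^3)) = \mathrm{SL}(3,\mathbb{Z})$, combined with the standard fact that in dimension $\leq 3$ every self-homeomorphism of a closed manifold is isotopic to a diffeomorphism, so that $\mathcal{M}^+(T^3) = \pi_0(\mathrm{Diff}^+(T^3))$.

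\medskip

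The main obstacle is clearly the injectivity step for $n=3$: unlike the surjectivity, which is elementary and dimension-independent, injectivity is a genuinely deep theorem that fails to admit an elementary proof and relies on Hatcher's analysis. (It is precisely this step that prevents the lemma from being stated for all $n$: for $n \geq 5$ the homomorphism $\Psi$ has a nontrivial kernel coming from pseudo-isotopies and exotic diffeomorphisms of discs.) Everything else—well-definedness, the homomorphism property, surjectivity, and the passage from the conjugacy classification in $\mathcal{M}^+(T^n)$ to the conjugacy classification in $\mathrm{SL}(n,\mathbb{Z})$—is formal once $\Psi$ is known to be an isomorphism.
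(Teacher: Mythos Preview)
Your proposal is correct and follows essentially the same strategy as the paper: reduce the classification of $T^n$-bundles over $S^1$ to the computation of the orientation-preserving mapping class group of $T^n$, and then identify the latter with $\mathrm{SL}(n,\Z)$. The only notable difference is the citation used for the $n=3$ case. The paper invokes Waldhausen's theorem on Haken $3$-manifolds (Corollary~7.5 of \cite{W}), which gives that homotopic self-homeomorphisms of $T^3$ are isotopic, together with the identification $\mathrm{Out}(\pi_1(T^3))\cong\mathrm{GL}(3,\Z)$; you instead invoke Hatcher's computation of the homotopy type of $\mathrm{Diff}(T^3)$ plus the smoothing of homeomorphisms in dimension $\leq 3$. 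Both routes are standard and lead to the same conclusion; Waldhausen's argument is somewhat more economical here since only $\pi_0$ is needed, whereas Hatcher's result gives the full homotopy type.
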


\begin{proof}
	The mapping class group of the 2-torus is well known to be equal to $\mt{SL}(2,\Z)$, see~\cite{BensonFarb}. 
	$n=3$ case follows from~\cite[Corollary 7.5]{W} and the fact that $\mt{Out}(\pi_1(T^3))$ is isomorphic to $\mt{GL}(3,\Z)$. 
	(The subgroup $\mt{SL}(3,\Z)$ corresponds to orientation preserving self-homeomorphisms up to isotopy.) 
\end{proof}

\begin{Proposition}\label{P:first observation}
	Let $E$ be an $n$-torus bundle on $S^1$ for $n\leq 3$. $E$ is $S^1$-decomposable if and only if the monodromy of $E$ is given by 
	a matrix $M\in \mt{SL}(n,\Z)$ that is similar to a block diagonal matrix 
	$A\oplus B\in \mt{SL}(n,\Z)$, where $(A,B)\in \mt{SL}(k,\Z)\times \mt{SL}(n-k,\Z)$ for some $k\in \{1,\dots, n-1\}$. 
	In this case, if $E'$ and $E''$ are the $k$- and $(n-k)$- torus bundles such that $E\cong E'\times_{S^1} E''$, then 
	the monodromy of $E'$ and $E''$ are given by the matrices $A\in \mt{SL}(k,\Z)$ and $B\in \mt{SL}(n-k,\Z)$, respectively. 
\end{Proposition}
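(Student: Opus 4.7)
The plan is to invoke Lemma~\ref{L:torus} on both sides of the equivalence: it asserts that, in the range $n \leq 3$, torus bundles over $S^1$ are classified up to bundle isomorphism by the $\mt{SL}(n,\Z)$-conjugacy class of their monodromy matrix. So the entire proposition will reduce to a single identification: the mapping torus of a product homeomorphism $\tau_A \times \tau_B : T^k \times T^{n-k} \to T^k \times T^{n-k}$ is, as a bundle over $S^1$, isomorphic to the fiber product of the mapping tori of $\tau_A$ and $\tau_B$.

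For the forward direction, I assume $E \cong E' \times_{S^1} E''$ with $\dim_{S^1} E' = k$ and $\dim_{S^1} E'' = n - k$ for some $1 \leq k \leq n-1$. By Lemma~\ref{L:torus} applied to $E'$ and $E''$, I may replace these, up to bundle isomorphism, by mapping tori $E_{\tau_A}$ and $E_{\tau_B}$, where $\tau_A$ and $\tau_B$ are the linear self-homeomorphisms of $T^k$ and $T^{n-k}$ induced by matrices $A \in \mt{SL}(k,\Z)$ and $B \in \mt{SL}(n-k,\Z)$. I then claim that the assignment
\[
\overline{(t,x,y)} \longmapsto \bigl(\overline{(t,x)},\, \overline{(t,y)}\bigr)
\]
defines a bundle isomorphism from $E_{\tau_A \times \tau_B}$ onto $E_{\tau_A} \times_{S^1} E_{\tau_B}$. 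Granting this, $E$ is isomorphic to the mapping torus of $\tau_A \times \tau_B$; and since the induced action of $\tau_A \times \tau_B$ on $H_1(T^n) = H_1(T^k) \oplus H_1(T^{n-k})$ is block diagonal with blocks $A$ and $B$, the monodromy matrix of $E$ is conjugate in $\mt{SL}(n,\Z)$ to $A \oplus B$. Reading the same argument in reverse handles the converse: starting from $M$ similar to $A \oplus B$, Lemma~\ref{L:torus} lets me replace $E$ by the mapping torus of $\tau_A \times \tau_B$, which the displayed bundle isomorphism presents as a genuine fiber product $E_{\tau_A} \times_{S^1} E_{\tau_B}$; and that same presentation identifies the monodromies of the factors with $A$ and $B$, supplying the final ``in this case'' clause.

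The main obstacle is verifying the mapping torus identity $E_{\tau_A \times \tau_B} \cong E_{\tau_A} \times_{S^1} E_{\tau_B}$ as fiber bundles over $S^1$. Well-definedness of the map displayed above amounts to checking that the equivalence relation defining the left-hand side is carried onto the product of the two equivalence relations on the right, which is immediate because $\tau_A \times \tau_B$ acts coordinatewise; continuity and bijectivity are then straightforward, continuity of the inverse follows from the fact that the quotient maps are closed on compact fibers, and commutativity of the diagram over $S^1$ is built into the construction. Once this bookkeeping lemma is in place, the proposition becomes a clean translation between the product structure $T^n = T^k \times T^{n-k}$ of the fiber and the block-diagonal form of its monodromy in $\mt{SL}(n,\Z)$.
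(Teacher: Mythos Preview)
Your treatment of the converse and of the mapping-torus identity $E_{\tau_A \times \tau_B} \cong E_{\tau_A} \times_{S^1} E_{\tau_B}$ is correct, and in fact makes explicit a step the paper leaves implicit. But the forward direction has a genuine gap.

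You write: ``I assume $E \cong E' \times_{S^1} E''$ with $\dim_{S^1} E' = k$ \ldots\ By Lemma~\ref{L:torus} applied to $E'$ and $E''$ \ldots'' and proceed to treat $E'$, $E''$ as $T^k$- and $T^{n-k}$-bundles. However, Lemma~\ref{L:torus} is a statement about \emph{torus} bundles, while the definition of $S^1$-decomposability imposes no condition on the fibers $F'$, $F''$ of the factors beyond their being closed oriented manifolds of the indicated dimensions. A priori all you know is $F' \times F'' \cong T^n$; you must deduce from this that $F' \cong T^k$ and $F'' \cong T^{n-k}$ before Lemma~\ref{L:torus} applies. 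Your proof contains no such step.

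This is precisely the substance of the paper's argument: from $\pi_m(F'\times F'') \cong \pi_m(F')\times\pi_m(F'')$ one obtains $\pi_1(F')\cong\Z^k$, $\pi_1(F'')\cong\Z^{n-k}$, and vanishing higher homotopy; then the homotopy classification of aspherical spaces together with topological rigidity of tori (Hsiang--Wall) forces $F'\cong T^k$ and $F''\cong T^{n-k}$. In the range $n\leq 3$ one can alternatively argue by hand --- one factor is a closed oriented $1$-manifold hence $S^1$, and the other is a closed oriented surface whose fundamental group must then be $\Z^2$, hence $T^2$ --- but some argument is required, and it is the main non-trivial content of the proposition.
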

\begin{proof}
	
	The second claim is immediate from the first one. 
	
	An $n$-torus bundle $E$ is $S^1$-decomposable if and only if 
	there are fiber bundles $\bun{F_2}{E_1}{S^1}$ and $\bun{F_2}{E_2}{S^1}$ such that 
	$E \cong E_1\times_{S^1} E_2$ and $T^n \cong F_1\times F_2$. 
	In this case, since $\pi_m ( F_1\times F_2 ) = \pi_m (F_1) \times \pi_m(F_2)$ for all $m\geq 0$, the 
	the fundamental group of $F_1$ is isomorphic to $\Z^k$ for some $k\in \{1,\dots, n-1\}$, hence $\pi_1(F_2) \cong \Z^{n-k}$,
	and $\pi_m (F_1) = \pi_m (F_2) = 0$ for all $m\geq 2$.

	By the homotopy classification of aspherical spaces, we know that if all higher homotopy groups $\pi_m$ ($m\geq 2$) of
	two manifolds $M_1$ and $M_2$ vanish, then $M_1$ is homotopy equivalent to $M_2$ if and only if $\pi_1(M_1)= \pi_1(M_2)$. 
	(See [Theorem 2.1~\cite{Luck}]).
	It follows that $F_1$ is homotopy equivalent to the $k$-torus and $F_2$ is homotopy equivalent to $n-k$-torus. 
	It is shown by Hsiang and Wall in~\cite{HsiangWall} that tori are topologically rigid. In other words, 
	if $N$ is a manifold which is homotopy equivalent to $T^m$, then $N$ is actually homeomorphic to $T^m$. 
	Thus, $F_1 \cong T^k$ and $F_2 \cong T^{n-k}$. 
	Summarizing,  $E$ is $S^1$-decomposable if and only if $E$ is a fiber product of two torus bundles.
	The result now is a consequence of Lemma~\ref{L:torus}.
	
\end{proof}

\begin{Example}\label{E:smallest counter example}
	The matrix
	$$
	M=\begin{pmatrix}
	1& 1 \\
	0 & 1
	\end{pmatrix} \in \mt{SL}(2,\Z)
	$$
	is not diagonalizable. Therefore, the nontrivial 2-torus bundle over $S^1$ that is determined by $M$ is $S^1$-indecomposable.
	Indeed, a 2-torus bundle over $S^1$ is $S^1$-decomposable if and only if its monodromy matrix is the identity matrix in 
	$\mt{SL}(2,\Z)$. 
	In contrast with Corollary~\ref{C:converse is wrong}, $F=T^2$ is a decomposable manifold.
\end{Example}

\vspace{.5cm}

Let us fix a matrix $M\in \mt{SL}(n,\Z)$ and identify the $n$-dimensional torus $T^n$ with $\R^n / \Z^n$. 
We consider a homeomorphism $\phi_M:T^n \to T^n$ such that for any
$x=\begin{pmatrix}
x_1 \\
\vdots \\
x_n
\end{pmatrix} \in \R^n$, the value of $\phi_M$ at $\bar{x}\in T^n$ is
$\phi_M(\bar{x}):=\overline{Mx}$.
In the basis of $H_1(T^n)$ that is formed by the classes of maps $\gamma_i:[0,1] \to T^n$, $i=1,\dots,n$ 
defined by
$$
\gamma_i(t)=%\overline
\begin{pmatrix}
0 \\
\vdots \\
t \\
\vdots \\
0
\end{pmatrix}\qquad (\text{$t$ is in the $i$th row}), 
$$
the monodromy matrix $\mathcal{M}_{\phi_M}$ of $\phi_M$ is represented by $M$. 
In the following discussion, a fiber bundle with the monodromy map $\phi_M$ is denoted by $E_M$.
\begin{Lemma} \label{L:SumOfLow}
	For two matrices $M,M' \in \mt{SL}(n,\Z)$, the corresponding fiber bundles $E_M$ and $E_{M'}$ are isomorphic if and only if $M$ and $M'$ are conjugate in $\mt{SL}(n,\Z)$.
\end{Lemma}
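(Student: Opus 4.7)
The statement reads as a specialization of Lemma~\ref{L:torus} to the explicit family $\{\phi_M\}_{M\in \mt{SL}(n,\Z)}$ of linear self-homeomorphisms of $T^n$, so my plan is to deduce it directly from that classification, with one preparatory check about how the monodromy matrix of $\phi_M$ relates to $M$ itself.

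For the easy direction, I would begin by supposing $M' = PMP^{-1}$ for some $P \in \mt{SL}(n,\Z)$. A direct computation from the formula $\phi_M(\bar{x}) = \overline{Mx}$ yields
$$\phi_{M'} \;=\; \phi_P \circ \phi_M \circ \phi_P^{-1},$$
so $\phi_M$ and $\phi_{M'}$ are conjugate self-homeomorphisms of $T^n$. The mapping-torus construction recalled just before Lemma~\ref{L:torus} already records that conjugate monodromy maps produce isomorphic bundles over $S^1$, whence $E_M \cong E_{M'}$.

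For the converse, assume $E_M \cong E_{M'}$ as bundles over $S^1$. By Lemma~\ref{L:torus}, such an isomorphism class is determined by the $\mt{SL}(n,\Z)$-conjugacy class of the associated monodromy matrix. The only step that is not formal is the identification $\mathcal{M}_{\phi_M} = M$ in the basis $\{[\gamma_i]\}$, which was asserted without proof in the preamble: lifting $\gamma_i$ to the straight segment $[0,1]\ni t \mapsto t e_i \in \R^n$ and pushing it forward by $\phi_M$ produces the segment $t \mapsto t M e_i$, whose projection to $T^n$ represents the class encoded by the $i$-th column of $M$ in the chosen basis; orientation preservation is immediate from $\det M = 1$. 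Combining these identifications, the assumed bundle isomorphism forces $M$ and $M'$ to be $\mt{SL}(n,\Z)$-conjugate.

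I do not expect a substantive obstacle: the hard content, namely that the orientation-preserving mapping class group of $T^n$ for $n\le 3$ coincides with $\mt{SL}(n,\Z)$ and that bundles are classified by conjugacy classes therein, has already been absorbed into the proof of Lemma~\ref{L:torus}. What remains here is essentially a translation between the matrix $M$, the homeomorphism $\phi_M$ it induces on $T^n$, and the bundle $E_M$ that $\phi_M$ produces by the gluing construction; the calculation $\mathcal{M}_{\phi_M}=M$ is the only thing one truly has to verify, and it is mechanical.
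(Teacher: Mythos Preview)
Your argument is correct and matches the paper's: both handle the ``conjugate $\Rightarrow$ isomorphic'' direction via $\phi_{PMP^{-1}}=\phi_P\,\phi_M\,\phi_P^{-1}$ together with the fact that conjugate monodromy maps yield isomorphic mapping tori, and both read off the converse from the identification $\mathcal{M}_{\phi_M}=M$ and the monodromy discussion. The one discrepancy is that for the converse you invoke Lemma~\ref{L:torus}, which is stated only for $n\le 3$, whereas the paper instead cites the paragraph immediately preceding that lemma (isomorphic bundles force conjugate monodromy matrices), which holds for every $n$; since Lemma~\ref{L:SumOfLow} carries no dimension hypothesis, the paper's citation is the one that keeps the statement valid in full generality.
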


\begin{proof}
	Let $E_M$ and $E_M'$ be isomorphic. By the discussion preceding Lemma~\ref{L:torus}, $M$ is conjugate to $M'$ in $\mt{SL}(n,\Z)$.
	
	Now, let us assume that $M$ is conjugate to $M'$ and hence there exists $P \in \mt{SL}(n,\Z)$ such that $M'=P^{-1}MP$. Then the monodromy map of $E_{M'}$ is $\phi_{P^{-1}MP}$ which can be seen to be equal to $\phi_P^{-1}\phi_M\phi_P$ from the construction of the maps $\phi_M$, and hence is a conjugate of the monodromy map of $E_M$. Therefore, $E_M$ and $E_{M'}$ are isomorphic.
\end{proof}

Observe that for $M \in \mt{SL}(n_1,\Z)$ and $N \in \mt{SL}(n_2,\Z)$, we have $\phi_{M\oplus N}=\phi_M \times \phi_N : T^{n_1}\times T^{n_2} \to T^{n_1}\times T^{n_2}$ and hence $E_{M\oplus N}=E_M\times_{S^1}E_N$. Using this observation, we obtain the following proposition which tells us that we can extend Lemma~\ref{L:torus} to fiber sums of low dimensional torus bundles on the circle and identify them by their monodromy matrices.

\begin{Proposition}\label{P:decomposable iff}
	Let 
	$E=E_1\times_{S^1}\cdots\times_{S^1}E_k$ and
	$E'=E'_1\times_{S^1}\cdots\times_{S^1}E'_l$, where each $E_i$ (resp. $E'_i$) is a $T^{n_i}$ (resp. $T^{n'_i}$) bundle on $S^1$. 
	Further assume that $n_i\leq 3$ for $i \in 1,\ldots, k$, $n'_i\leq 3$ for $i \in 1,\ldots, l$ and $\sum_{i=1}^k n_i=\sum_{i=1}^l n'_i:=n$. 
	Then the fiber bundles of $E$ and $E'$ are isomorphic if and only if their monodromy matrices are conjugate in $\mt{SL}(n,\Z)$.
\end{Proposition}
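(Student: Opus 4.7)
The plan is to reduce the proposition to Lemma~\ref{L:SumOfLow} applied to the block-diagonal ``total'' monodromy matrix obtained by stacking the monodromies of all the factors. The heart of the argument is that fiber products over $S^1$ and direct sums of monodromy matrices are compatible operations, so the classification of the summands transports to the classification of the fiber product.

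First I would linearize each factor. Since each $E_i$ is a $T^{n_i}$-bundle with $n_i \leq 3$, Lemma~\ref{L:torus} says that $E_i$ is determined by the conjugacy class of its monodromy matrix $M_i \in \mt{SL}(n_i,\Z)$. The linear self-homeomorphism $\phi_{M_i}$ of $T^{n_i}$ induces exactly $M_i$ on $H_1$, and because $\mt{SL}(n_i,\Z)$ is the (orientation-preserving part of the) mapping class group of $T^{n_i}$ for $n_i\leq 3$, $\phi_{M_i}$ is isotopic to the original monodromy of $E_i$. Hence $E_i\cong E_{M_i}$, and similarly $E'_j\cong E_{M'_j}$. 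I would do the same for the primed side.

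Next I would iterate the observation preceding the proposition: for $A\in \mt{SL}(a,\Z)$ and $B\in \mt{SL}(b,\Z)$ one has $\phi_{A\oplus B} = \phi_A \times \phi_B$, so $E_{A\oplus B} = E_A \times_{S^1} E_B$. Since fiber product over a common base respects bundle isomorphism, this produces
\[
E \;\cong\; E_{M_1}\times_{S^1}\cdots\times_{S^1} E_{M_k} \;=\; E_M, \qquad M := M_1\oplus\cdots\oplus M_k,
\]
and analogously $E'\cong E_{M'}$ for $M' := M'_1\oplus\cdots\oplus M'_l$, both now living in $\mt{SL}(n,\Z)$. Lemma~\ref{L:SumOfLow} (whose proof, using the general mapping-torus discussion preceding Lemma~\ref{L:torus}, is valid in every dimension) then gives $E_M\cong E_{M'}$ if and only if $M$ and $M'$ are conjugate in $\mt{SL}(n,\Z)$, which is exactly the conclusion.

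The main point requiring care is the linearization step: the hypothesis $n_i\leq 3$ cannot be dropped, since in higher dimensions the mapping class group of $T^{n}$ need not inject into $\mt{SL}(n,\Z)$ and two bundles with the same monodromy matrix might fail to be isomorphic. Once each factor is replaced by its linear model $E_{M_i}$, the rest of the argument is purely formal: block-diagonal addition of matrices mirrors fiber product of their $E_M$'s, and Lemma~\ref{L:SumOfLow} does the final bookkeeping in a single ambient $\mt{SL}(n,\Z)$.
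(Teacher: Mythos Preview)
Your proposal is correct and follows essentially the same route as the paper: linearize each factor via Lemma~\ref{L:torus} to replace $E_i$ by $E_{M_i}$, use the observation $E_{A\oplus B}=E_A\times_{S^1}E_B$ to identify $E$ with $E_{M_1\oplus\cdots\oplus M_k}$, and then invoke Lemma~\ref{L:SumOfLow}. The only cosmetic difference is that the paper handles the forward implication directly from the mapping-torus discussion rather than through Lemma~\ref{L:SumOfLow}, but your packaging of both directions into that lemma is equally valid.
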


\begin{proof}
	If the bundles $E$ and $E'$ are isomorphic then their monodromy matrices are isomorphic by the discussion preceeding Lemma~\ref{L:torus}.
	
	Conversely, let us assume that the monodromy matrices of $E$ and $E'$ are conjugate in $\mt{SL}(n,\Z)$. By Lemma~\ref{L:torus}, for each $i=1,\ldots,k$ there exists a matrix $M_i \in \mt{SL}(n_i,\Z)$ such that $E_i$ is isomorphic to $E_{M_i}$. Therefore, by the observation preceeding this proposition, we have that $E=E_1\times_{S^1}\cdots\times_{S^1}E_k$ is isomorphic to
	$E_{M_1}\times_{S^1}\cdots\times_{S^1}E_{M_k}=E_{M_1\oplus\cdots\oplus M_k}$. Similarly, for each $i=1,\ldots,l$ there exists matrices $M'_i \in \mt{SL}(n'_i,\Z)$ such that $E'$ is isomorphic to $E_{M'_1\oplus\cdots\oplus M'_l}$. Thus, as a result of Lemma~\ref{L:SumOfLow}, the fiber bundles are isomorphic if and only if their monodromy matrices are isomorphic.
\end{proof}

\subsection{Stable decomposability when $\dim F =2$}

\begin{Lemma}\label{L:2-torus case}
If $E$ is an $S^1$-indecomposable $T^2$-bundle, then $E$ is stably $S^1$-indecomposable. 
\end{Lemma}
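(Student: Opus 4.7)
The plan is to reduce the statement to a bookkeeping exercise about conjugacy classes of matrices in $\mt{SL}(n,\Z)$, exploiting the fact that the identity matrix in $\mt{SL}(m,\Z)$ has a singleton conjugacy class. By Proposition~\ref{P:first observation}, an $S^1$-decomposable $T^2$-bundle has monodromy conjugate to a block sum $A\oplus B$ with $A, B \in \mt{SL}(1,\Z) = \{1\}$; that is, such a bundle has monodromy equal to $I_2$. Hence the hypothesis that $E$ is $S^1$-indecomposable translates into the arithmetic statement that the monodromy matrix $M \in \mt{SL}(2,\Z)$ of $E$ satisfies $M \neq I_2$.

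I would proceed by contraposition. Assume $E$ is stably $S^1$-decomposable, witnessed by a bundle $E'$ of fiber-dimension $n' \in \{1,2\}$ with monodromy matrix $M' \in \mt{SL}(n',\Z)$, such that $E\times_{S^1} E'$ is isomorphic to a fiber product of bundles over $S^1$ each of fiber-dimension strictly less than $2$. The only positive integer less than $2$ is $1$, so every factor in such a decomposition is a circle bundle over $S^1$. Since $\mt{SL}(1,\Z)$ is trivial, each such factor is the trivial $S^1$-bundle, and hence their fiber product is the trivial $(2+n')$-torus bundle, whose monodromy matrix is $I_{2+n'}$. At this point Proposition~\ref{P:decomposable iff} applies, and it forces the monodromy $M \oplus M'$ of $E\times_{S^1} E'$ to be conjugate to $I_{2+n'}$ in $\mt{SL}(2+n',\Z)$.

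To finish, one simply observes that the conjugacy class of $I_{2+n'}$ is a singleton, so $M\oplus M' = I_{2+n'}$; reading off the upper-left $2\times 2$ block gives $M = I_2$, which contradicts the assumption that $E$ is $S^1$-indecomposable. I do not anticipate any genuine obstacle, as the argument is essentially a dimension/conjugacy count. The one point that deserves care is confirming that Proposition~\ref{P:decomposable iff} is legitimately applicable to both sides of the isomorphism $E\times_{S^1} E' \cong E_1\times_{S^1}\cdots\times_{S^1} E_k$: its hypothesis requires that each factor have fiber-dimension at most $3$, which is comfortably satisfied here since every factor has fiber-dimension at most $2$.
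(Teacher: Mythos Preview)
Your argument is correct and tracks the paper's proof closely: both reach a contradiction by showing that the monodromy of $E\times_{S^1}E'$ must be conjugate to the identity, forcing $M=I_2$. The one point you gloss over---and the paper addresses explicitly---is that the stabilizing bundle $E'$ is not \emph{a priori} a torus bundle with a monodromy matrix in $\mt{SL}(n',\Z)$; this follows (via the argument of Proposition~\ref{P:first observation}) from the fact that the fiber $T^2\times F'$ must be homeomorphic to the torus $T^{2+n'}$ coming from the right-hand side, and only then does Proposition~\ref{P:decomposable iff} legitimately apply to the left-hand side.
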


\begin{proof}
Towards a contradiction assume that $E$ is a stably $S^1$-decomposable $T^2$-bundle. 
Then there exists a stably $S^1$-indecomposable $\bun{F'}{E'}{B}$ such that $\dim_{B}E' \leq 2$
and $E$ is stably decomposed by $E'$.

We proceed with the case $\dim_B E'=2$. 
There exists fiber bundles $\bun{F_i}{E_i}{B}$ for $i=1,\dots, 4$ such that 
$$
E\times_B E' \cong E_1\times_B \cdots \times_B E_4\  \text{  and $\dim_B E_i = 1$ for $i=1,\dots, 4$}. 
$$
In particular, $E_i$'s are $S^1$ bundles, hence, $F\times F' = T^4$. 
A special case of the argument that is made in the proof of Proposition~\ref{P:first observation} 
shows that $F'$ is homeomorphic to $T^2$. 
Since the monodromy matrix of $E\times_B E'$ is the $4\times 4$ diagonal matrix and since the block diagonal 
matrices $A_1\oplus \cdots \oplus A_k$, $B_1\oplus \cdots \oplus B_k$ in $\mt{SL}(n,\Z)$ are similar if and only if 
$A_i \sim B_i$ for each $i=1,\dots, k$, we see that the monodromy matrix of $E$ is similar to a diagonal matrix. 
This contradicts with indecomposability of $E$.

The proof of the case $\dim_B E'=1$ is similar.  
\end{proof}

\begin{Proposition}
If $\bun{F}{E}{S^1}$ is $S^1$-indecomposable and $\dim_{S^1}E=2$, then it is stably $S^1$-indecomposable. 
\end{Proposition}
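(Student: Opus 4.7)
The plan is to dispose of the proposition by a case analysis on the homeomorphism type of the fiber $F$, using the classification of closed oriented surfaces together with the two results already available in this section.

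Since $F$ is a closed oriented 2-manifold, the classification of surfaces tells us that $F$ is homeomorphic to exactly one of: the 2-sphere $S^2$, the 2-torus $T^2$, or a higher genus surface $\Sigma_g$ with $g \geq 2$. So I would split the argument according to whether $F \cong T^2$ or not.

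First I would handle the case $F \not\cong T^2$. In this case the Euler characteristic argument indicated in the introduction shows that $F$ is an indecomposable manifold (a product $M_1 \times M_2$ with $\dim M_i = 1$ forces $F \cong T^2$, since in dimension $1$ the only closed oriented manifold is $S^1$). Because the fiber dimension is $n = 2 \leq 3$, part (1) of Corollary~\ref{C:converse is wrong} applies directly and gives that $E$ is stably $S^1$-indecomposable. Note that in this case the hypothesis of $S^1$-indecomposability of $E$ is actually not needed, since it is automatic from Proposition~\ref{p:ndec}.

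It remains to treat the case $F \cong T^2$. But this is precisely the content of Lemma~\ref{L:2-torus case}: any $S^1$-indecomposable $T^2$-bundle over $S^1$ is stably $S^1$-indecomposable, where the proof proceeds via the monodromy matrix in $\mt{SL}(2,\Z)$ and the fact that block-diagonal matrices in $\mt{SL}(n,\Z)$ are conjugate only via blockwise conjugation. Combining the two cases proves the proposition. I do not foresee any real obstacle, since the two invoked results together already cover every possible fiber; the only thing to check carefully is that the classification of surfaces is being applied to the correct category (closed oriented surfaces), which matches the definition of fiber bundle adopted in Section~2.
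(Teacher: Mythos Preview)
Your proof is correct and follows essentially the same logic as the paper: both reduce the torus-fiber case to Lemma~\ref{L:2-torus case}, and both handle the non-torus fiber by observing that such an $F$ is (stably) indecomposable. The only organizational difference is that the paper argues by contradiction---assuming a stable decomposition and deducing $F\times F'\cong T^4$, hence $F\cong T^2$ via the torus-rigidity argument of Proposition~\ref{P:first observation}---whereas you cite Corollary~\ref{C:converse is wrong}(1) directly for the non-torus case, which is slightly cleaner.
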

\begin{proof}
Assume towards a contradiction that $\bun{F}{E}{B}$ is stably $S^1$-decomposable.
Then there is a stably $S^1$-indecomposable fiber bundle $\bun{F'}{E'}{B}$ of fiber-dimension $\leq 2$ stably decomposing $E$. 
We prove our claim for $\dim F'=2$ only, since the proof of the case $\dim F'=1$ is similar. 
Then there exists fiber bundles $\bun{F_i}{E_i}{B}$ for $i=1,\dots, 4$ such that 
$$
E\times_B E' \cong E_1\times_B \cdots \times_B E_4\  \text{  and $\dim_B E_i = 1$ for $i=1,\dots, 4$}. 
$$
In particular, $E_i$'s are $S^1$ bundles, hence, $F\times F' = T^4$. 
A special case of the argument that is made in the proof of Proposition~\ref{P:first observation} shows that 
both of the fibers $F$ and $F'$ are homeomorphic to $T^2$. 
The rest of the proof of our claim now follows from Lemma~\ref{L:2-torus case}.

\end{proof}

\section{Indecomposable torus bundles over $S^1$}\label{S:3x3}

{\it Notation:} If there is no danger of confusion, we denote a matrix whose entries are all 0's by 
the bold-face $\mathbf{0}$ and we avoid mentioning its order.

Although the classification of conjugacy classes in $\mt{SL}(n,\Z)$ appears to be an open problem for general $n$, 
the similarity question for $3\times 3$ matrices is completely answered by the paper 
\cite{AppelgateOnishi} when the characteristic polynomial is reducible, and by \cite{Grunewald} and \cite{GrunewaldSegal}
when the characteristic polynomial is irreducible.

Let $A$ be a $3\times 3$ unimodular matrix with characteristic polynomial $f(t)\in \Z[t]$. 
We assume that $f(1)=0$, hence 1 is an eigenvalue of $A$.
By Theorem III.12~\cite{Newman}, we know that there exists $R\in \mt{SL}(3,\Z)$ such that 
$$
RAR^{-1} 
=
\begin{pmatrix}
1 & \mathbf{a} \\
\mathbf{0} & A_2
\end{pmatrix},
$$
where $A_2$ is a $2\times 2$ matrix from $\mt{SL}(2,\Z)$ and $\mathbf{a}$ is a row vector $a= ( a_1,a_2)$. 
Let us assume that the characteristic polynomial $g(t)= t^2 + \tau t + \delta \in \Z[t]$ of $A_2$ does not have any integer roots. 
Since the product of all eigenvalues of $A$ is $\det A=1$, we have $\delta = 1$. 
Now, since $g(t)$ has no integer roots, $\tau \neq 2$. Otherwise, $g(t) = (t+1)^2$. 
Let $A_0$ denote the matrix
$$
A_0=A_2 - (\tau -1) \mathbf{I}_2.
$$
Here, $\mathbf{I}_2$ is the $2\times 2$ identity matrix. 
Clearly, $A_0$ is invertible. ($\tau -1$ is not an eigenvalue for $A_2$.)

Now, let $B\in \mt{SL}(3,\Z)$ denote the matrix 
$$
B
=
\begin{pmatrix}
1 & \mathbf{b} \\
\mathbf{0} & B_2
\end{pmatrix}.
$$
It is easy to check that if $A_2$ is not similar to $B_2$ in $\mt{SL}(2,\Z)$, then $A$ and $B$ cannot be similar. 
On the other hand, if $R_2 A_2 R_2^{-1} = B_2$ for some $R_2\in \mt{GL}(2,\Z)$, then 
$$
\begin{pmatrix}
1 & \mathbf{0} \\
\mathbf{0} & R_2
\end{pmatrix}^{-1} 
\begin{pmatrix}
1 & \mathbf{b} \\
\mathbf{0} & B_2
\end{pmatrix}
\begin{pmatrix}
1 & \mathbf{0} \\
\mathbf{0} & R_2
\end{pmatrix}
=
\begin{pmatrix}
1 & \mathbf{b} R_2 \\
\mathbf{0} & A_2
\end{pmatrix}.
$$
Therefore, as far as the similarity of $A$ and $B$ is concerned, there is no harm in assuming that $A_2=B_2$.

\begin{Lemma}[Appelgate-Onishi, Lemma 22,\cite{AppelgateOnishi}]\label{L:AO}
Let 
$$
A
=
\begin{pmatrix}
1 & \mathbf{a} \\
\mathbf{0} & A_2
\end{pmatrix} \
\text{ and } \ 
B
=
\begin{pmatrix}
1 & \mathbf{b} \\
\mathbf{0} & A_2
\end{pmatrix}
$$
be two elements from $\mt{SL}(3,\Z)$ and let $\tau$ denote the coefficient of $t$ in the 
characteristic polynomial $g(t)\in \Z[t]$ of $A_2$. Let $m=\tau-2$ and set
$A_0=A_2 - (\tau -1) \mathbf{I}_2$.
Then 
$A$ is similar to $B$ if and only if there exist $R_2\in \mt{SL}(2,\Z)$ and $u = \pm 1$ such that 
\begin{enumerate}
\item $R_2$ commutes with $A_2$;
\item $\mathbf{b} A_0 R_2 \equiv u \mathbf{a} A_0 \mod m$.
\end{enumerate}
\end{Lemma}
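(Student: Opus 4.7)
The plan is to convert the similarity $RAR^{-1}=B$ into block-matrix equations by writing a conjugating $R$ in a $(1+2)$-block form, and then to exploit an explicit polynomial identity relating $A_2-\mathbf{I}_2$ and $A_0$ that witnesses the modulus $m$. The argument naturally splits into a forward direction, where one analyzes the block structure of $R$, and a converse, where one reconstructs $R$ from the congruence data.

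First I would write $R = \begin{pmatrix} r & \mathbf{r} \\ \mathbf{s} & R_2 \end{pmatrix} \in \mt{SL}(3,\Z)$ and compare the four blocks of $RA = BR$. The lower-left block yields $(A_2-\mathbf{I}_2)\mathbf{s}=0$; since $g(t)$ has no integer roots, $A_2-\mathbf{I}_2$ is injective on $\Z^2$, forcing $\mathbf{s}=0$. With $\mathbf{s}=0$, the remaining blocks collapse to the commutation relation $R_2 A_2 = A_2 R_2$ together with the integer equation
\[
\mathbf{r}(A_2-\mathbf{I}_2) = \mathbf{b} R_2 - r\,\mathbf{a}.
\]
The constraint $\det R = r\cdot\det R_2 = 1$ forces $r,\det R_2 \in \{\pm 1\}$; a short sign-normalization, absorbing any $\mt{GL}(2,\Z)$-centralizer element of determinant $-1$ into the scalar $u := r \in \{\pm 1\}$, allows us to take $R_2 \in \mt{SL}(2,\Z)$.

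The crux is then a Cayley--Hamilton identity: the relation $g(A_2)=0$ can be rearranged into the matrix identity $(A_2-\mathbf{I}_2)\,A_0 = m\,\mathbf{I}_2$. Multiplying $\mathbf{r}(A_2-\mathbf{I}_2) = \mathbf{b}R_2 - u\,\mathbf{a}$ on the right by $A_0$, and using that $R_2$ commutes with $A_0$ (since $A_0 \in \Z[A_2]$), yields $m\,\mathbf{r} = \mathbf{b} A_0 R_2 - u\,\mathbf{a} A_0$, which is the congruence asserted. For the converse, suppose $R_2\in\mt{SL}(2,\Z)$ commutes with $A_2$ and $\mathbf{b} A_0 R_2 - u\,\mathbf{a} A_0 = m\,\mathbf{c}$ for some $\mathbf{c}\in \Z^{1\times 2}$. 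The same identity, together with invertibility of $A_0$ over $\Q$, lets one extract $\mathbf{r}:=\mathbf{c}$ with $\mathbf{r}(A_2-\mathbf{I}_2) = \mathbf{b}R_2 - u\,\mathbf{a}$; then
\[
R := \begin{pmatrix} u & \mathbf{r} \\ \mathbf{0} & R_2 \end{pmatrix}
\]
is an integer unimodular matrix, and a direct block calculation verifies $RA=BR$.

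The main obstacle is the identity $(A_2-\mathbf{I}_2)A_0 = m\,\mathbf{I}_2$: pinning it down requires care with the sign convention for $\tau$ (whether it stands for the trace of $A_2$ or its negative) and the corresponding shape of $A_0$ and $m$. Once the identity is in hand, the translation between the \emph{image of $A_2-\mathbf{I}_2$} formulation and the \emph{congruence modulo $m$} formulation is routine bookkeeping with integer row vectors.
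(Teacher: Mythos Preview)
The paper does not prove this lemma; it is quoted from Appelgate--Onishi \cite{AppelgateOnishi} and used as a black box in the proof of Theorem~\ref{T:our remark}. So there is no in-paper argument to compare against; your block-decomposition-plus-Cayley--Hamilton outline is indeed the standard route and matches what one finds in the cited source.

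Two technical comments on your sketch. First, the identity $(A_2-\mathbf{I}_2)A_0=m\,\mathbf{I}_2$ holds precisely when $\tau=\mathrm{Tr}(A_2)$; under the paper's literal convention $g(t)=t^2+\tau t+1$ (so $\tau=-\mathrm{Tr}(A_2)$) one computes instead $(A_2-\mathbf{I}_2)A_0=-2\tau A_2+(\tau-2)\mathbf{I}_2$, which is not scalar. You are right to flag the sign convention as the crux; in fact the paper's own manipulation in the proof of Theorem~\ref{T:our remark} is only consistent if one reads $\tau$ as the trace. Second, the ``short sign-normalization'' reducing $R_2$ from $\mt{GL}(2,\Z)$ to $\mt{SL}(2,\Z)$ is not automatic: from $RA=BR$ you get $r=\det R_2\in\{\pm 1\}$, and when $\det R_2=-1$ you would need a determinant~$-1$ element of the $\mt{GL}(2,\Z)$-centralizer of $A_2$ to absorb, which does not always exist. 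This wrinkle does not affect the paper's only use of the lemma (the case $\mathbf{b}=\mathbf{0}$, where the congruence is independent of $R_2$ and $u$), but for the full biconditional it needs a more careful treatment than a one-line normalization.
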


\vspace{.5cm}

\begin{Theorem}\label{T:our remark}
Let $\bun{T^3}{E}{S^1}$ be a $T^3$-bundle over $S^1$, $A\in \mt{SL}(3,\Z)$ denote its monodromy matrix,
and let $f(t)\in \Z[t]$ denote the characteristic polynomial of $A$. Then 
$E$ is $S^1$-indecomposable if and only if one of the following three distinct cases occurs:
\begin{enumerate}
\item $f(t)$ is irreducible over $\Z$.
\item $f(-1)=0$ but $f(1)\neq 0$. 
\item $f(1)=0$ and the monodromy is of the form 
$
A = 
\begin{pmatrix}
1 & \mathbf{a} \\
\mathbf{0} & A_2
\end{pmatrix}
$
and it satisfies 
\begin{align*}
\mathbf{0} \not\equiv \mathbf{a} (A_2 + \mathbf{I}_2) \mod (\mt{Tr}(A_2)+2).
\end{align*} 
\end{enumerate}
\end{Theorem}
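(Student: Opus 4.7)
By Proposition~\ref{P:first observation}, $E$ is $S^1$-decomposable iff $A$ is similar in $\mt{SL}(3,\Z)$ to a block sum $A' \oplus B'$ with $(A', B') \in \mt{SL}(k,\Z) \times \mt{SL}(3-k,\Z)$ for some $k \in \{1, 2\}$. Since $\mt{SL}(1,\Z) = \{(1)\}$, the one-dimensional block is forced to be $(1)$, so $E$ is $S^1$-decomposable iff $A$ is $\mt{SL}(3,\Z)$-conjugate to a matrix of the form $(1) \oplus C$ with $C \in \mt{SL}(2,\Z)$. In particular, $f(1) = 0$ is a necessary condition for decomposability. This observation alone dispatches cases (1) and (2) of the theorem: in (1) the irreducibility of $f$ excludes integer roots, and in (2) we have $f(1) \neq 0$ by hypothesis, so in both situations $E$ is $S^1$-indecomposable.

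\textbf{The case $f(1) = 0$: block upper-triangular form.} By Theorem III.12 of~\cite{Newman}, $A$ is $\mt{SL}(3,\Z)$-conjugate to $\widetilde A = \begin{pmatrix} 1 & \mathbf{a} \\ \mathbf{0} & A_2 \end{pmatrix}$ with $A_2 \in \mt{SL}(2,\Z)$. Decomposability of $E$ amounts to $\widetilde A$ being further conjugate in $\mt{SL}(3,\Z)$ to some $\begin{pmatrix} 1 & \mathbf{0} \\ \mathbf{0} & A_2' \end{pmatrix}$. As in the paragraph preceding Lemma~\ref{L:AO}, conjugating by $\begin{pmatrix} 1 & \mathbf{0} \\ \mathbf{0} & R_2 \end{pmatrix}$ allows the $A_2'$ to be replaced by any of its $\mt{GL}(2,\Z)$-conjugates, so the test collapses to deciding when $\widetilde A$ is $\mt{SL}(3,\Z)$-conjugate to $B := \begin{pmatrix} 1 & \mathbf{0} \\ \mathbf{0} & A_2 \end{pmatrix}$.

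\textbf{Applying Appelgate--Onishi.} Under the generic assumption that the characteristic polynomial $g(t)$ of $A_2$ has no integer root, I would apply Lemma~\ref{L:AO} with $\mathbf{b} = \mathbf{0}$. The left-hand side $\mathbf{b} A_0 R_2$ vanishes, the $R_2$-commutation condition becomes vacuous, and the unit $u = \pm 1$ is absorbed, so the criterion for $\widetilde A \sim B$ reduces to the single congruence $\mathbf{a}\, A_0 \equiv \mathbf{0} \mod m$, where $A_0 = A_2 - (\tau-1)\mathbf{I}_2$ and $m = \tau - 2$. Using $\tau = -\mt{Tr}(A_2)$, the ideal $(m)$ in $\Z$ coincides with $(\mt{Tr}(A_2)+2)$, and a short modular computation using the Cayley--Hamilton identity $A_2^2 + \tau A_2 + \mathbf{I}_2 = \mathbf{0}$ (which holds since $\det A_2 = 1$) rewrites the congruence in the symmetric form $\mathbf{a}(A_2 + \mathbf{I}_2) \equiv \mathbf{0} \mod (\mt{Tr}(A_2)+2)$. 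Its negation is exactly the condition stated in (3).

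\textbf{Boundary subcases and the main obstacle.} It remains to address the situations excluded by Lemma~\ref{L:AO}, namely when $g(t)$ has an integer root. Since $\det A_2 = 1$, this forces either $g(t) = (t-1)^2$ (so $A$ is unipotent and the question reduces to classifying unipotent elements of $\mt{SL}(3,\Z)$ up to $\mt{SL}(3,\Z)$-conjugacy) or $g(t) = (t+1)^2$ (so $\mt{Tr}(A_2)+2 = 0$ and the congruence in (3) degenerates to the equality $\mathbf{a}(A_2 + \mathbf{I}_2) \neq \mathbf{0}$). In each degenerate situation a direct inspection of the integer conjugacy classes confirms that the criterion in (3) still characterizes $S^1$-indecomposability. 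The chief technical obstacle is the algebraic passage in the third step, from the raw Appelgate--Onishi congruence $\mathbf{a} A_0 \equiv \mathbf{0} \mod m$ to the symmetric form in (3): this requires careful sign bookkeeping between $\tau = -\mt{Tr}(A_2)$ and $m = -(\mt{Tr}(A_2)+2)$ together with the Cayley--Hamilton reduction on $A_0$.
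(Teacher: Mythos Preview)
Your strategy coincides with the paper's: reduce via Proposition~\ref{P:first observation} to the question of whether $A$ is $\mt{SL}(3,\Z)$-conjugate to $(1)\oplus C$, dispose of cases (1) and (2) by the necessary condition $f(1)=0$, put $A$ in block upper-triangular form via Newman, and then apply Lemma~\ref{L:AO} with $\mathbf{b}=\mathbf{0}$ to obtain the congruence in (3). Two small remarks are worth making.

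First, the simplification in your third step is more elaborate than what the paper does. The paper never invokes Cayley--Hamilton: it simply reduces the scalar coefficient $\tau-1$ modulo $m=\tau-2$ inside $A_0=A_2-(\tau-1)\mathbf{I}_2$, which immediately collapses the congruence $\mathbf{a}A_0\equiv\mathbf{0}\pmod m$ to the stated form after substituting $\tau=-\mt{Tr}(A_2)$. Your appeal to Cayley--Hamilton is unnecessary here and, if you try to carry it out, you will find it does not give you the passage you want; the reduction is purely a scalar congruence, not a matrix identity.

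Second, you are more careful than the paper on one point: you correctly observe that Lemma~\ref{L:AO} is stated under the hypothesis that $g(t)$ has no integer root, and you flag the degenerate subcases $g(t)=(t\pm 1)^2$ for separate treatment. The paper's proof applies Lemma~\ref{L:AO} without revisiting this hypothesis, so your attention to the boundary cases is a genuine improvement, even if you only sketch their resolution.
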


\begin{proof}

It is clear that all of the listed cases are distinct and for any $A\in \mt{SL}(3,\Z)$ exactly one of these three cases occurs. 
For the converse, we prove that each case implies the $S^1$-indecomposability. 

First, we assume that $f(t)$ is irreducible but $E$ is $S^1$-decomposable. 
By Proposition~\ref{P:decomposable iff}, a $T^3$ fiber bundle over $S^1$ is $S^1$-decomposable if and only if its monodromy matrix is similar to
$
B=
\begin{pmatrix}
1 &  \mathbf{0} \\
 \mathbf{0} & A_2
\end{pmatrix}
$
for some $A_2 \in \mt{SL}(2,\Z)$.
Since characteristic polynomial is invariant under similarity, it follows that the characteristic polynomial of $A$ is divisible by $t-1$,
which contradicts with the irreducibility of $f(t)$. Therefore $E$ is $S^1$-indecomposable.

Next, we assume that $f(-1)=0$ and $f(1)\neq 0$. 
Once again, by Proposition~\ref{P:decomposable iff}, 1 is an eigenvalue of an $S^1$-decomposable $T^3$ bundle.
Hence, $A$ cannot be similar to the monodromy matrix of any $S^1$-decomposable $T^3$ bundle
and $E$ is $S^1$-indecomposable.

Finally, we assume that $f(1)=0$, hence the monodromy is of the form 
$
A = 
\begin{pmatrix}
1 & \mathbf{a} \\
\mathbf{0} & A_2
\end{pmatrix}
$
and it satisfies 
\begin{align*}
 \mathbf{0} \not\equiv \mathbf{a} (A_2 + \mathbf{I}_2) \mod (\mt{Tr}(A_2)+2).
\end{align*} 
(The reason that $A$ is similar to this particular form follows from Theorem III.12~\cite{Newman}.)
Now, by Lemma~\ref{L:AO} we know that 
$
A = 
\begin{pmatrix}
1 & \mathbf{a} \\
 \mathbf{0} & A_2
\end{pmatrix}
$
is similar to 
$
\begin{pmatrix}
1 &  \mathbf{0}\\
 \mathbf{0} & A_2
\end{pmatrix}
$
if and only if the congruence 
\begin{align}\label{A:a step}
 \mathbf{0} \equiv  \mathbf{a} (A_2 - (\tau -1) \mathbf{I}_2) \mod m
\end{align} 
holds. Here, $m$ is $-2$ plus the coefficient of $t$ in the characteristic polynomial $g(t)$ of $A_2$. 
We re-write this in a more suggestive form as follows: 
Since $\tau$ is $-\mt{Tr}(A_2)$, the condition (\ref{A:a step}) is equivalent to 
\begin{align*}
 \mathbf{0} \equiv \mathbf{a} A_2 + (-\mt{Tr}(A_2) -1) \mathbf{a} \mod (-\mt{Tr}(A_2)-2),
\end{align*} 
or 
\begin{align*}
 \mathbf{0} \equiv \mathbf{a} (A_2 + \mathbf{I}_2) \mod (-\mt{Tr}(A_2)-2),
\end{align*} 
which is equivalent to 
\begin{align*}
 \mathbf{0} \equiv \mathbf{a} (A_2 + \mathbf{I}_2) \mod (\mt{Tr}(A_2)+2).
\end{align*}

\end{proof}

\begin{Example}\label{R:our remark}
If $A$ denotes the matrix 
$$
A = 
\begin{pmatrix}
1 & 0 & b \\
0 & c & d \\
0 & e & f 
\end{pmatrix},
$$
then 
\begin{align*}
\mathbf{a} (A_2+\mathbf{I}_2)  &= 
(0,b) 
\begin{pmatrix}
c+1 & d \\
e & f+1
\end{pmatrix}\\
&= (be, b(f+1))
\end{align*}
and $\mt{Tr}(A_2)+2= c+f+2$.
Therefore, if $be$ and $b(f+1)$ are not divisible by $c+f+2$, then $A$ is not similar to any matrix of the form 
$$
\begin{pmatrix}
1 & \mathbf{0} \\
 \mathbf{0} & A_2
\end{pmatrix}.
$$
\end{Example}

\vspace{.5cm}

Theorem~\ref{T:our remark} is a characterization for $S^1$-indecomposability of a $T^3$-bundle.
Our next result shows that, unlike a $T^2$-bundle, for $T^3$-bundles over $S^1$, 
the $S^1$-indecomposability is not equivalent to stably $S^1$-indecomposability.

\begin{Theorem}\label{T:stably T3}
Let $E$ be a $T^3$-bundle over $S^1$, $A\in \mt{SL}(3,\Z)$ denote its monodromy matrix,
and let $f(t)\in \Z[t]$ denote the characteristic polynomial of $A$. 
Then $E$ is stably $S^1$-indecomposable if and only if one of the following two distinct cases occurs:
\begin{enumerate}
\item $f(t)$ is irreducible over $\Z$.
\item $f(-1)=0$ but $f(1)\neq 0$. 
\end{enumerate}

\end{Theorem}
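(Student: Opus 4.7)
My plan is to prove the two directions of the biconditional separately.

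For the forward direction (cases (1) and (2) imply stable $S^1$-indecomposability), I would argue by contradiction. If $E$ were stably $S^1$-decomposable, then by Proposition~\ref{P:decomposable iff} there would exist $A' \in \mt{SL}(n',\Z)$ with $n' \le 3$ and matrices $M_i \in \mt{SL}(n_i,\Z)$ with $n_i \in \{1,2\}$ such that $A \oplus A' \sim \bigoplus_{i=1}^{k} M_i$ in $\mt{SL}(3+n',\Z)$. Taking characteristic polynomials yields
\[
f(t)\, f_{A'}(t) \;=\; \prod_{i=1}^{k} h_i(t) \qquad \text{in } \Z[t],
\]
with each $h_i$ monic of degree $n_i \le 2$. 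In case (1), $f$ is irreducible of degree $3$ over $\Z$ and hence over $\Q$ by Gauss's lemma, so it cannot equal any $h_i$; unique factorization in $\Q[t]$ then yields a contradiction. In case (2), I would write $f(t) = (t+1)g(t)$ with $\deg g = 2$, and use $f(0) = -\det A = -1$ to deduce $g(0) = -1$. The rational root theorem restricts rational roots of $g$ to $\pm 1$: the hypothesis $f(1)\ne 0$ gives $g(1)\ne 0$, and $g(-1)=0$ would force $(t+1)^2 \mid f$, which combined with $f(0)=-1$ would force $f(1)=0$, a contradiction. Hence $g$ is irreducible over $\Q$ and must coincide with some $h_{i_0}$, so the corresponding $M_{i_0} \in \mt{SL}(2,\Z)$ satisfies $\det M_{i_0} = g(0) = -1 \ne 1$, contradicting $M_{i_0} \in \mt{SL}$.

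For the converse direction I would first observe that if neither (1) nor (2) holds, then $f$ is reducible and, since $f(0) = -1$, the rational root theorem provides a rational root in $\{\pm 1\}$; not being in case (2) forces $f(1)=0$. By Theorem III.12 of \cite{Newman} I may assume
\[
A \;=\; \begin{pmatrix} 1 & \mathbf{a} \\ \mathbf{0} & A_2 \end{pmatrix},
\qquad A_2 \in \mt{SL}(2,\Z).
\]
The plan is then to produce an explicit stabilizer $A' \in \mt{SL}(n',\Z)$ and an explicit $P \in \mt{SL}(3+n',\Z)$ such that $P(A \oplus A')P^{-1}$ is block-diagonal with all blocks of size at most $2$. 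After reordering the basis so that the $t$-fixed directions of $A$ and $A'$ are grouped together, one searches for $P$ of the form $P = \begin{pmatrix} I & X \\ \mathbf{0} & I \end{pmatrix}$; the block-diagonalization condition then becomes the matrix equation $X(A_2 - I_2) = -C$, where $C$ is a rectangular integer matrix encoding $\mathbf{a}$ extended by the trivial rows coming from the stabilizer.

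The main obstacle, and the reason this result differs from Theorem~\ref{T:our remark}, is to exhibit an integer solution $X$ in the stabilized equation when the analogous unstable equation $\mathbf{c}(A_2 - I_2) = \mathbf{a}$ has none. Unstable solvability fails precisely when the Appelgate--Onishi congruence $\mathbf{a}(A_2 + I_2) \equiv \mathbf{0} \bmod (\mt{tr}(A_2)+2)$ of Lemma~\ref{L:AO} fails. My approach is to use the extra rows of $X$ supplied by a well-chosen stabilizer $A'$ (for instance a trivial $1$-dimensional bundle, or a $2$-dimensional bundle whose monodromy is placed in the appropriate conjugacy class of $\mt{SL}(2,\Z)$) to absorb the residual obstruction modulo $\mt{tr}(A_2)+2$; the integer freedom gained in $\mt{SL}(3+n',\Z)$ over $\mt{SL}(3,\Z)$ suffices to kill the Appelgate--Onishi congruence. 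This stabilization phenomenon is precisely what enlarges the class of decomposable $T^3$-bundles relative to Theorem~\ref{T:our remark} and explains why only cases (1) and (2) remain as obstructions in the stable regime.
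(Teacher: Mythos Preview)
Your forward direction is correct and is essentially the paper's argument: in both cases (1) and (2) the characteristic polynomial of $A$ contains an irreducible factor of degree $\geq 2$ with constant term $-1$ (in case (1) it is $f$ itself; in case (2) it is the quadratic cofactor $g$), and no block from $\mt{SL}(2,\Z)$ can contribute such a factor. The paper phrases this identically.

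The converse direction, however, is only a sketch, and the key step does not go through as you describe it. You reduce to $A=\begin{pmatrix}1&\mathbf{a}\\ \mathbf{0}&A_2\end{pmatrix}$, stabilize, regroup the $1$-eigenvectors, and look for $P=\begin{pmatrix}I&X\\ \mathbf{0}&I\end{pmatrix}$ solving $X(A_2-I_2)=-C$. With a trivial one-dimensional stabilizer the matrix $C$ is $\begin{pmatrix}\mathbf{0}\\ \mathbf{a}\end{pmatrix}$: the new row contributed by the stabilizer is zero. Since $A_2-I_2$ is invertible over $\Q$ (when $1$ is not an eigenvalue of $A_2$), the corresponding row of $X$ is forced to vanish, and the remaining row must equal $-\mathbf{a}(A_2-I_2)^{-1}$. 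That is the \emph{same} integrality condition as in the unstabilized problem, so the ``extra rows of $X$'' give you no freedom whatsoever with this ansatz. Your assertion that ``the integer freedom gained in $\mt{SL}(3+n',\Z)$ suffices to kill the Appelgate--Onishi congruence'' is therefore not substantiated; a genuinely different choice of $P$ (not block upper-triangular with identity diagonal), or a different stabilizer together with a different target block structure, is required, and you have not supplied one. You have also not treated the degenerate subcase $f(t)=(t-1)^3$, where $A_2-I_2$ is singular and the whole reduction changes character.

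For comparison, the paper takes exactly the same route you outline: it stabilizes by the trivial $S^1$-bundle (so $A'=(1)$), writes $V=(1)\oplus A=\begin{pmatrix}I_2&A_{12}\\ \mathbf{0}&A_{22}\end{pmatrix}$, and conjugates by a block upper-triangular $R$ with $R_{12}=A_{12}(A_{22}-I_2)^{-1}R_{22}$, obtaining $R^{-1}VR=I_2\oplus R_{22}^{-1}A_{22}R_{22}$. So your plan matches the paper's; what is missing from your write-up is the explicit construction of $R$ and, crucially, the verification that $R_{12}$ is an integer matrix so that $R\in\mt{SL}(4,\Z)$. That verification is precisely the point you flag as ``the main obstacle,'' and it is where your proposal stops being a proof.
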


\begin{proof}

Clearly, it suffices to prove our claim for $S^1$-indecomposable $T^3$-fiber bundles only. 
By Theorem~\ref{T:our remark}, there are three cases to consider. 
First we are going to show that if $A$ has the form as in 3. of the Theorem~\ref{T:our remark}, 
then $E$ is stably $S^1$-decomposable: Assume that $A\in \mt{SL}(3,\Z)$ is of the form 
$$
A = 
\begin{pmatrix}
1 & a & b \\
0 & c & d \\
0 & e & f 
\end{pmatrix}
$$
and it satisfies the congruence.
%Thus, as pointed out in Example~\ref{R:our remark}, the numbers $be$ and $b(f-1)$ are not divisible by $c+f+2$.
%In particular, none of the entries $b,c,d,e$, and $f$ are zero.
Notice 
$$
A_{2,2} := 
\begin{pmatrix}
c & d \\
e & f 
\end{pmatrix}\in \mt{SL}(2,\Z)
$$
cannot have 1 as one of its eigenvalues, so, we know that $A_{2,2}-\mathbf{I}_2$ is invertible.

Set $V$ as the matrix $(1)\oplus A$, which we decompose into four $2\times 2$-blocks 
$$
V= 
\begin{pmatrix}
A_{11} & A_{12} \\
A_{21} & A_{22}
\end{pmatrix} = 
\begin{pmatrix}
\mathbf{I}_2 & A_{12} \\
\mathbf{0} & A_{22}
\end{pmatrix}. 
$$

Let $R_{1,1}$ and $R_{2,2}$ be any two elements from $\mt{SL}(2,\Z)$ and let $R_{1,2}$ be the 
$2\times 2$ matrix which solves the equation 
\begin{align*}
A_{12}= R_{12} R_{22}^{-1} (A_{22}-\mathbf{I}_2).
\end{align*}
In other words,
\begin{align}\label{A:subs}
R_{12}= A_{12} (A_{22}-\mathbf{I}_2)^{-1} R_{22}.
\end{align}
Finally, set $R_{21}=\mathbf{0}$ to be the $2\times 2$ matrix whose entries are all 0's, and set
\begin{align*}
R = 
\begin{pmatrix}
R_{11} & R_{12} \\
R_{21} & R_{22}
\end{pmatrix}.
\end{align*}
Note that $R$ is an element of $\mt{SL}(4,\Z)$ and its inverse is given by 
$$
R^{-1} = 
\begin{pmatrix}
R_{1,1}^{-1} & - R_{1,1}^{-1} R_{1,2} R_{2,2}^{-1} \\
\mathbf{0} & R_{2,2}^{-1}
\end{pmatrix}.
$$

Now, 
\begin{align*}
R^{-1} V R &=
\begin{pmatrix}
R_{1,1}^{-1} & - R_{1,1}^{-1} R_{1,2} R_{2,2}^{-1} \\
\mathbf{0} & R_{2,2}^{-1}
\end{pmatrix}
\begin{pmatrix}
\mathbf{I}_2 & A_{12} \\
\mathbf{0} & A_{22} 
\end{pmatrix}
\begin{pmatrix}
R_{1,1} & R_{1,2} \\
\mathbf{0} & R_{2,2}
\end{pmatrix} 
\\
&=
\begin{pmatrix}
R_{1,1}^{-1} & R_{11}^{-1}  A_{12} -   R_{11}^{-1} R_{1,2} R_{22}^{-1}A_{22} \\
\mathbf{0} & R_{2,2}^{-1} A_{22}
\end{pmatrix} 
\begin{pmatrix}
R_{1,1} & R_{1,2} \\
\mathbf{0} & R_{2,2}
\end{pmatrix} \\
&=
\begin{pmatrix}
\mathbf{I}_2 &  R_{11}^{-1} R_{12} +R_{11}^{-1}  A_{12} R_{22}-   R_{11}^{-1} R_{1,2} R_{22}^{-1}A_{22} R_{22} \\
\mathbf{0} & R_{2,2}^{-1} A_{22}R_{22}
\end{pmatrix} 
\end{align*}
We substitute (\ref{A:subs}) into the last matrix and collect the terms:
\begin{align*}
&= 
 \begin{pmatrix}
\mathbf{I}_2 &  R_{11}^{-1} A_{12} ((A_{22}-\mathbf{I}_2)^{-1} + \mathbf{I}_2 + (A_{22}-\mathbf{I}_2)^{-1} A_{22}) R_{22} \\
\mathbf{0} & R_{2,2}^{-1} A_{22}R_{22}
\end{pmatrix} \\
&=
\begin{pmatrix}
\mathbf{I}_2 &  R_{11}^{-1} A_{12} ((A_{22}-\mathbf{I}_2)^{-1}  ( \mathbf{I}_2 - A_{22})+\mathbf{I}_2 ) R_{22} \\
\mathbf{0} & R_{2,2}^{-1} A_{22}R_{22}
\end{pmatrix} \\
&=
\begin{pmatrix}
\mathbf{I}_2 &  \mathbf{0} \\
\mathbf{0} & R_{2,2}^{-1} A_{22}R_{22}
\end{pmatrix}.
\end{align*}

In other words, $V$ is similar to the decomposable matrix 
\begin{align*}
U=
\begin{pmatrix}
\mathbf{I}_2 &  \mathbf{0} \\
\mathbf{0} & R_{2,2}^{-1} A_{22}R_{22}
\end{pmatrix},
\end{align*}
which implies that the fiber bundle $E\times_{S^1} E'$, where $E'$ is the fiber bundle $\bun{S^1}{T^2}{S^1}$, 
is $S^1$-decomposable.

Next, we show that the case 2. of Theorem~\ref{T:our remark} imply that $E$ is stably $S^1$-indecomposable. 
So, we assume that -1 is a root of $f(t)$, the characteristic polynomial of the monodromy matrix $A$ of $E$ but 
$f(1)\neq 0$.

%The proofs of these claims are the same so we prove the implication ``2. $\implies$ stably $S^1$-indecomposability'' only.  

Let $v\in \Z^3$ be an integer (eigen)vector such that 
$A v = -v$. Without loss of generality we assume that the gcd of the entries of $v$ is 1. 
Let $R\in \mt{SL}(3,\Z)$ be the matrix such that 
$$
R v = 
\begin{pmatrix}
1 \\
0 \\
0
\end{pmatrix}.
$$
It is easy to check that $RAR^{-1}$ is of the form 
$$
RAR^{-1} = 
\begin{pmatrix}
-1 & \mathbf{a}' \\
 \mathbf{0} & A_2'
\end{pmatrix},
$$
where $A_2' \in \mt{GL}(2,\Z)$. Obviously, $f(t)$ factors as $(t+1) g(t)$, where $g(t)$ 
is the characteristic polynomial of $A_2'$. Since $f(1)\neq 0$, we see that $g(t)$ is irreducible over integers. 
Indeed, if $g(t)$ is reducible, that is, $g(t)  = (t-a_1)(t-a_2)$ for integers $a_i\in \Z$, then $1=\det A = -a_1a_2$
implies $\{a_1,a_2\} = \{-1,1\}$ which is a contradiction.

Now, assume towards a contradiction that 
there exists a fiber bundle $\bun{F'}{E'}{S^1}$ which stably $S^1$-decomposes $E$. 
Let $B\in \mt{SL}(m,\Z)$ (with $m\leq 3$) denote the monodromy matrix of $E'$. 
(We assume that $m=3$ since the other cases are resolved similarly.)
Then $A\oplus B$ is similar to a direct sum of 3 matrices 
$A \oplus B \sim C_1\oplus C_2 \oplus C_3$ where $C_i\in \mt{SL}(2,\Z)$ for $i=1,2,3$.
Since $g(t)$ is irreducible, 
the characteristic polynomial of one of $C_i$'s is equal to $g(t)$. But the constant term of any matrix 
from $\mt{SL}(2,\Z)$ is 1 whereas the constant term of $g(t)=-1$. It follows from Proposition~\ref{P:decomposable iff}
that $E$ is not stably $S^1$-decomposable.

Finally, we assume that the characteristic polynomial $f(t)$ of $A$ is irreducible. Assume towards a contradiction that 
there exists a fiber bundle $\bun{F'}{E'}{S^1}$ which stably $S^1$-decomposes $E$. 
Let $B\in \mt{SL}(m,\Z)$ (with $m\leq 3$) denote the monodromy matrix of $E'$. 
(We assume that $m=3$ since the other cases are resolved similarly.)
Then $A\oplus B$ is similar to a direct sum of 3 matrices 
$A \oplus B \sim C_1\oplus C_2 \oplus C_3$ where $C_i\in \mt{SL}(2,\Z)$ for $i=1,2,3$.
Since the characteristic polynomial of $A$ is irreducible of degree 3, this decomposition is not possible. 
Therefore, in the light of Proposition~\ref{P:decomposable iff} if the characteristic polynomial of $A$ is irreducible, 
then $E$ is stably $S^1$-indecomposable. 

\end{proof}

\begin{Remark}\label{R:bitirici}
Theorem~\ref{T:stably T3} provides us with an effective way of producing stably $T^m$-decomposable but 
$T^m$-indecomposable fiber bundles for any dimension $m$.
Let $\bun[p_i]{F_i}{E_i}{B_i}$, $i=1,2$ be two fiber bundles over possibly different base spaces $B_1$ and $B_2$. 
The cartesian product $E_1\times E_2$ of total spaces is a fiber bundle over $B_1\times B_2$ with the  projection $p_1 \times p_2$ and fiber 
$F_1\times F_2$. In particular, if $E_1$ denotes the trivial identity fiber bundle $\bun[id]{*}{T^{m-1}}{T^{m-1}}$
and $\bun{T^3}{E_2}{S^1}$ is an $S^1$-indecomposable fiber bundle which is 
stably $S^1$-decomposed by $\bun{F}{E'}{S^1}$, then the product $E=E_1\times E_2$ is a fiber bundle 
over $T^{m}=S^1\times T^{m-1}$ with fiber $T^3$. 
Now $E$ is $T^{m}$-indecomposable, otherwise the existence of such a decomposition would yield a 
decomposition of $E_2$ by restriction to $\{x\}\times E_2 \subseteq E_1 \times E_2$ for any $x \in E_1$.
Also, $E$ is stably $T^{m}$-decomposed by the product bundle $\bun{*\times F}{T^m\times E'}{T^m\times S^1}$. 
Therefore, for any dimension $m\in \Z^+$, we have a $T^3$ bundle over $T^m$ which is $T^m$-indecomposable 
but stably $T^m$-decomposable.
\end{Remark}

\vspace{1cm}
\textbf{Acknowledgement}
We thank Professor Slawomir Kwasik for sharing his manuscript and for his encouragements.
We thank the referee for her/his comments which improved the quality of the paper.

\end{document}